\documentclass{article}

\usepackage{authblk}
\usepackage{amsmath, amssymb, amsthm,bbm}
\usepackage{graphicx,url,enumerate,xcolor,eucal}
\usepackage[lined,boxed,commentsnumbered]{algorithm2e}

\newtheorem{theorem}{Theorem}[section]
\newtheorem{lemma}[theorem]{Lemma}
\newtheorem{proposition}[theorem]{Proposition}

\theoremstyle{definition}
\newtheorem{definition}[theorem]{Definition}
\newtheorem{example}[theorem]{Example}
\newtheorem{remark}[theorem]{Remark}

\newcommand{\I}{\mathbbm{i}\mspace{1mu}}
\newcommand{\zexp}{\mathfrak{exp}\mspace{1mu}}
\newcommand{\zcosh}{\mathfrak{cosh}\mspace{1mu}}
\newcommand{\zsinh}{\mathfrak{sinh}\mspace{1mu}}
\newcommand{\zcos}{\mathfrak{cos}\mspace{1mu}}
\newcommand{\zsin}{\mathfrak{sin}\mspace{1mu}}

\newcommand{\zco}{\mathfrak{C}\mspace{1mu}}
\newcommand{\zdu}{\mathfrak{D}\mspace{1mu}}
\newcommand{\Z}{\mathfrak{Z}}
\newcommand{\G}{\mathfrak{G}}
\newcommand{\M}{\mathbb{M}}
\newcommand{\U}{\mathfrak{U}}
\newcommand{\W}{\mathfrak{W}}

\newcommand{\Mat}{{\rm Mat}}
\newcommand{\CZ}{\mathbb{C}\mathfrak{Z}}

\newcommand{\zdag}{{\hat{\dag}}}

\DeclareMathOperator{\tr}{tr}
\DeclareMathOperator{\Tr}{Tr}

\newcommand{\zm}{\mathbf{0}}
\newcommand{\mI}{\mathbb{I}}

\numberwithin{equation}{section}

\title{Weakly Unitary Zeon Matrices}
\author{G.~Stacey Staples\footnote{Email: sstaple@siue.edu}}
\affil{Department of Mathematics \&
Statistics\\
Southern Illinois University Edwardsville\\
Edwardsville, IL 62026-1653\\
USA}

\date{}  

\begin{document}

\maketitle

\begin{abstract}
In this paper, zeon matrix groups are discussed for the first time.  Extending the concept of unitary complex matrices (i.e., matrices $A$ such that $A^\dagger A=\mI$) to matrices having entries in the complex zeon algebra $\CZ$,  a ``zeon dagger'' operator $\zdag$ is defined and used to declare a zeon matrix $\Xi$ to be {\em unitary} if $\Xi^\zdag\Xi=\mI$. The unitary zeon matrices of order $n$ form a subgroup $\U_n$ of all invertible order-$n$ zeon matrices.   A more interesting notion is that of ``weak unitarity'' of a zeon matrix.  In particular, a zeon matrix $\Xi$ will be said to be {\em weakly unitary} if $\Xi^\zdag\Xi=\mI+\Gamma$, where $\Gamma$ is a diagonal nilpotent zeon matrix.  The weakly unitary matrices of order $n$ form a group $\W_n$, of which $\U_n$ is a subgroup.  It is further shown that the exponential of a graph's nilpotent adjacency matrix has a natural orthogonal decomposition into the sum of a weakly unitary zeon matrix and a nilpotent zeon matrix.  Eigenvalues of the weakly unitary zeon matrix reveal information about the graph's cycles and paths.   
 \\
\\
Keywords: zeons, matrix groups, graph processes, linear operators 
\end{abstract}

\section{Introduction}

Zeon algebras can be thought of as commutative fermion algebras; i.e., their generators commute and square to zero.  They arise as subalgebras of Clifford algebras (of appropriate signature) and have natural applications in combinatorics and graph theory \cite{Feinsilverzeons,NetoSIAM,CAZ}.  
  
In this paper, the concept of unitary complex matrices (i.e., matrices $A$ such that $A^\dagger A=\mI$) is extended to matrices over the complex zeon algebra $\CZ$;  a ``zeon dagger'' operator $\zdag$ is defined and used to declare a zeon matrix $\Xi$ to be unitary if $\Xi^\zdag\Xi=\mI$. The unitary zeon matrices of order $n$ form a subgroup $\U_n$ of all invertible order-$n$ zeon matrices   A much more interesting notion is that of ``weak unitarity'' of a zeon matrix.  In particular, a zeon matrix $\Xi$ is said to be weakly unitary if $\Xi^\zdag\Xi=\mI+\Gamma$, where $\Gamma$ is a diagonal nilpotent zeon matrix.  The weakly unitary matrices $\W_n$ of order $n$ form a subgroup of $\U_n$.

The paper is laid out as follows.  Essential terminology and background on zeon algebras is presented in Section \ref{preliminaries}.  Graph-theoretic results related to zeon algebras are reviewed in Section \ref{NAM}, while Section \ref{processes} illustrates how zeon matrix exponentials reveal paths in sequences of graphs.  

Motivated by the discussion of zeon path matrices, the notions of unitary and weakly unitary zeon matrices are presented in Section \ref{unitary matrices}, where it is shown that when $\Phi$ is the nilpotent adjacency matrix of a graph, the matrix exponential $\zexp(\I \Phi)$, can be decomposed into an orthogonal sum of a weakly unitary matrix and a nilpotent zeon matrix.  Further, the weakly unitary matrix and its eigenvalues reveal information about paths and cycles contained in the graph.  Writing the nilpotent adjacency matrix $\Phi$ of a graph as the Hadamard product of ``fermion adjacency matrices,'' an alternative formulation of $\zexp(\I\Phi)$ is presented, potentially opening a new avenue of potential discovery.

The paper closes with Section \ref{conclusion}, in which some avenues for further investigation are suggested.

\subsection{Preliminaries}\label{preliminaries}
For $n\in\mathbb{N}$,  let $\CZ_n$ denote the complex commutative algebra generated by the collection $\{\zeta_{\{i\}}: 1\le i\le n\}$ along with the scalar $1=\zeta_\varnothing$ subject to the following multiplication rules:
\begin{gather*}
\zeta_{\{i\}}\,\zeta_{\{j\}}=\zeta_{\{i,j\}}=\zeta_{\{j\}}\,\zeta_{\{i\}}\,\,\text{ \rm
for }i\ne j,\text{ \rm and}\\
{\zeta_{\{i\}}}^2=0\,\,\text{ \rm for }1\le i\le n.
\end{gather*}
It is evident that a general element $u\in\CZ_n$ can be expanded as $u=\displaystyle\sum_{I\in2^{[n]}}u_{I}\,\zeta_{I}$, or more simply as $\sum_I u_I\zeta_I$,  where $I\in2^{[n]}$ is a subset of the {\em $n$-set}, $[n]:=\{1,2,\ldots,n\}$, used as a multi-index, $u_{I}\in\mathbb{R}$, and $\displaystyle\zeta_{I}=\prod_{\iota\in I}\zeta_\iota$.  The algebra $\CZ_n$ is called the ($n$-particle) {\em complex zeon algebra}.  

As a vector space, this $2^n$-dimensional algebra has a canonical basis of  {\em basis blades} of the form $\{\zeta_I: I\subseteq [n]\}$.  The null-square property of the generators $\{\zeta_i:1\le i\le n\}$ guarantees that the product of two basis blades satisfies the following:\begin{equation}\label{blade product}
\zeta_I\zeta_J=\begin{cases}
\zeta_{I\cup J}& I\cap J=\varnothing,\\
0&\text{\rm otherwise.}
\end{cases}
\end{equation}
It should be clear that $\CZ_n$ is graded.  For non-negative integer $k$, the {\em grade-$k$ part} of element $u=\sum_I u_I\zeta_I$ is defined as \begin{equation*}
\langle u\rangle_k=\sum_{\{I:|I|=k\}}u_I\zeta_I.
\end{equation*}
An inner product is defined on $\CZ_n$ by linear extension of 
\begin{equation*}
\displaystyle\left<\sum_I u_I\zeta_I,\zeta_J\right>=u_J.
\end{equation*}

The basis blades $\{\zeta_I: I\in 2^{[n]}\}$ form an orthonormal collection with respect to this inner product; thus, any element $u\in\CZ_n$ can be expanded as $\displaystyle u=\sum_{I}\left<u,\zeta_I\right>\zeta_I.$   Moreover, an inner product  norm is naturally induced on $\CZ_n$ by $\|u\|=\langle u,u\rangle^{1/2}$.    

It is often convenient to separate the scalar term of a zeon element from the rest of it. To this end, for $z\in\CZ_n$ we write $\zco z=\langle z\rangle_0$ for the {\em scalar part} of $z$, and $\mathfrak{D}z=z-\zco z$ for the {\em dual part} of $z$.  The dual part of any zeon element is always nilpotent; i.e., $(\zdu z)^k=0$ for some positive integer $k$.  The least such positive integer is called the {\em index of nilpotency} of $\zdu z$ and is denoted by $\kappa(\zdu z)$.

The finite-dimensional algebra is extended to zeon algebras of arbitrary dimension by defining \begin{equation*}
\CZ={\rm span}\{\zeta_I: I\subset\mathbb{N}, I \,\text{\rm finite}\}, 
\end{equation*}
which we call the {\em infinite-dimensional (complex) zeon algebra}.

An element  $u\in CZ$ is uniquely invertible if and only if $\zco u\ne0$, and the inverse  is given by \begin{equation*}
u^{-1}=\frac{1}{\zco u}\sum_{j=0}^{\kappa-1}(-1)^{j}(\zco u)^{-j}(\mathfrak{D}u)^{j}.
\end{equation*}

The maximal ideal consisting of nilpotent zeon elements will be denoted by ${\CZ}^\circ=\{u\in \CZ: \zco u=0\}$. The multiplicative Abelian group of invertible zeon elements is denoted by \begin{equation*}
{\CZ}^\times=\CZ\setminus {\CZ}^\circ=\{u\in \CZ: \zco u\ne 0\}.
\end{equation*}

\subsection{Zeons and Graph Theory}\label{NAM}

For purposes of the discussion at hand, a {\em graph} on $n$ vertices is an ordered pair $G=(V,E)$ of sets.  Elements of $V=\{v_1, \ldots, v_n\}$ are the {\em vertices} of $G$, and elements of $E$ are unordered pairs of distinct vertices, called the {\em edges} of $G$.  Two vertices $v_i,v_j\in V$ are said to be {\em adjacent} if there exists an edge $\{v_i,v_j\}\in E$.  

A {\em walk} $(v_0,\ldots,v_k)$ in a graph $G$ is a finite sequence of vertices in $G$ with {\em initial vertex} $v_0$ and {\em terminal vertex} $v_k$ such that there exists an edge $\{v_j,v_{j+1}\}\in E$ for each $0\le j\le k-1$.  A walk of length $k$, or {\em $k$-walk} contains $k$ edges.  A {\em path} is a walk in which no vertex appears more than once.  A {\em closed walk} is a walk whose initial vertex is also its terminal vertex.  A {\em cycle} is a closed path whose terminal vertex is its initial vertex (i.e., $v_k=v_0$).  A {\em Hamiltonian cycle} is an $n$-cycle in a graph on $n$ vertices; i.e., each vertex in the graph appears exactly once (with the exception of the terminal/initial vertex).

\begin{definition}
Let $G=(V,E)$ be a graph on $n$ labeled vertices $V=\{v_1, \ldots, v_n\}$ and edges $E$ consisting of unordered pairs of vertices.  The {\em nilpotent adjacency matrix of $G$} is defined as the $n\times n$ matrix $\Phi=(\varphi_{ij})$ having entries in ${\Z_n}$ determined as follows:
\begin{equation*}
\varphi_{ij}=\begin{cases}
0&\{v_i, v_j\}\notin E,\\
\zeta_{\{j\}}& \{v_i, v_j\}\in E.
\end{cases}
\end{equation*}
\end{definition}

Modifying Dirac notation, let us define $\langle \zeta_{\{i\}} \vert$ to denote the row vector whose entries are all zero, except the $i$th column, which is $\zeta_{\{i\}}$.  Similarly, $\vert j\rangle$ will denote the column vector with $1$ in the $j$th row and zeros elsewhere.  

Because the generators of $\Z_n$ square to zero, a straightforward inductive argument shows that the nonzero terms of $\left<v_i\vert\Phi^k\vert v_j\right>$ are multivectors corresponding to two types of $k$-walks from $v_i$ to $v_j$:
self-avoiding walks (i.e., walks with no repeated vertices) and walks in which $v_i$ is repeated exactly once at some step but are otherwise self-avoiding.   For $i\ne j$, multiplication by $\zeta_{\{i\}}$ cancels walks of the second type, so that nonzero terms represent paths from $v_i$ to $v_j$.   

When $i=j$, walks of the second type are zeroed in the $k^{\rm th}$ step when the walk is closed. Hence, terms of $\left<v_i\vert \Phi^k\vert v_i\right>$ represent the collection of $k$-cycles based at $v_i$.  We thereby arrive at the following result, as first presented in \cite{zeon laplacian}.  

\begin{description}
\item[Nil-Structure Theorem]
Let $\Phi$ be the nilpotent adjacency matrix of an
$n$-vertex graph $G$, and let $k\in\mathbb{N}$.  For $1\le i\ne j \le n$, 
\begin{equation*}
\left<\zeta_{\{i\}}\vert\Phi^k\vert v_j\right>=\sum_{\genfrac{}{}{0pt}{}{(w_1,\ldots, w_k)\in V^k}{(w_k=v_j)\wedge(m\ne \ell\Rightarrow w_m\ne w_\ell)}}\zeta_{(w_1,\ldots, w_k)}=
\sum_{\genfrac{}{}{0pt}{}{I\subseteq V}{|I|=k+1}}\omega_I \zeta_{I},\end{equation*}
where $\omega_I$ denotes the number of $k$-paths from $v_i$ to $v_j$.  Further,  when $1\le i \le n$,
\begin{equation*}
\left<v_i\vert\Phi^k\vert v_i\right>=\sum_{\genfrac{}{}{0pt}{}{I\subseteq V}{|I|=k}}\omega_I \zeta_{I},\end{equation*}
where $\omega_I$ denotes the number of $k$-cycles on vertex set $I$ based at $v_i\in I$.  
\end{description}

\section{The Nilpotent Adjacency Matrix Exponential}\label{NAMexp}

It is worth noting that any square matrix of order $m$ having entries in $\CZ^\circ$ must be nilpotent by the pigeonhole principle.  For any $X\in\M_m(\CZ^\circ)$, the index of nilpotency of $X$ is denoted by $\kappa(X)$.  The exponential of any nilpotent square matrix $X\in\M_m(\CZ^\circ)$ can therefore be expressed as the finite sum \begin{equation*}
\zexp(X)=\sum_{k=0}^{\kappa(\zdu(X))-1}\frac{X^k}{k!}.
\end{equation*}
Hence, the following zeon elementary functions of $X$ can be computed as finite sums: \begin{align*}
\zcos(X)=\frac{1}{2}(\zexp(\I X)+\zexp(-\I X)),&\hskip20pt\zsin(X)=\frac{1}{2}(\zexp(\I X)-\zexp(-\I X)),\\
\zcosh(X)=\frac{1}{2}(\zexp(X)+\zexp(- X)),&\hskip20pt\zsinh(X)=\frac{1}{2}(\zexp(X)-\zexp(-X)).
\end{align*}

Letting $\Phi\in\M_m({\CZ_m}^\circ)$ be the nilpotent adjacency matrix of a graph $G=(V,E)$ on $m$ vertices, it follows that
\begin{equation*}
\zexp(\Phi)=\sum_{k=0}^{\kappa(\Phi)-1}\frac{1}{k!}\Phi^k=\mI+\Psi,
\end{equation*}
where $\Psi\in\M_m({\CZ_m}^\circ)$.  

The off-diagonal entries of $\Psi$ satisfy
\begin{equation}\label{offdiag}
\langle \zeta_{\{i\}}\vert\Psi\vert j\rangle=\sum_{I\subseteq([n]\setminus\varnothing)}\frac{\omega_I}{(|I|-1)!}\zeta_I,\hskip12pt(i \ne j),
\end{equation}
where $\omega_I$ is the number of paths in $G$ from $v_i$ to $v_j$ on vertices indexed by $I$.
The diagonal entries of $\Psi$ satisfy
\begin{equation}\label{diag}
\langle i\vert\Psi\vert i\rangle=\sum_{I\subseteq([n]\setminus\varnothing)}\frac{\xi_I}{|I|!}\zeta_I,
\end{equation} where $\xi_I$ is the number of cycles\footnote{Edges are regarded as $2$-cycles.} in $G$ on vertices indexed by $I$, and containing $v_i$. 

For these reasons, the matrix $\Psi=\zdu\zexp(\Phi)$ will be referred to as the {\em zeon path matrix} of $G$.

\subsubsection{Elementary Functions and Graph Properties}

\begin{lemma}
Let $G=(V,E)$ be a graph on $n$ vertices with nilpotent adjacency matrix $\Phi$.  Then \begin{enumerate}
\item $G$ is bipartite if and only if  $\tr\zexp(\Phi)=\tr\zcosh(\Phi)$.\label{bipartite cond.}
\item $G$ is Hamiltonian if and only if $\tr (\Phi^n)\ne \zm$, which is true if and only if  $\langle\tr\zexp(\Phi)\rangle_n\ne 0$.\label{Hamiltonian cond.}
\end{enumerate}
\end{lemma}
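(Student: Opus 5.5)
Both parts reduce to the Nil-Structure Theorem applied to the diagonal entries of powers of $\Phi$. For each $k$, the theorem gives
\begin{equation*}
\tr(\Phi^k)=\sum_{i=1}^n\langle v_i\vert\Phi^k\vert v_i\rangle=\sum_{|I|=k}\Big(\sum_{i\in I}\omega^{(i)}_I\Big)\zeta_I ,
\end{equation*}
where $\omega^{(i)}_I$ is the number of $k$-cycles on vertex set $I$ based at $v_i$. The coefficients are nonnegative integers, so there is no cancellation. Hence $\tr(\Phi^k)\ne\zm$ if and only if $G$ contains a $k$-cycle, with edges counted as $2$-cycles for $k=2$. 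Moreover $\tr(\Phi^k)$ is homogeneous of grade $k$, and $\tr(\Phi^0)=n$.

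\textbf{Part 1 (bipartite).} Write $\zexp(\Phi)-\zcosh(\Phi)=\zsinh(\Phi)=\sum_{k\ \mathrm{odd}}\Phi^k/k!$, a finite sum since $\Phi$ is nilpotent. Taking traces,
\begin{equation*}
\tr\zexp(\Phi)=\tr\zcosh(\Phi)\iff\sum_{k\ \mathrm{odd}}\frac{\tr(\Phi^k)}{k!}=\zm .
\end{equation*}
Each $\tr(\Phi^k)$ lies in grade $k$, so distinct $k$ cannot cancel one another. The sum therefore vanishes if and only if $\tr(\Phi^k)=\zm$ for every odd $k$. By the observation above, this says $G$ has no odd cycle. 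We then invoke the classical fact that a graph is bipartite if and only if it has no odd cycle. A closed walk of odd length contains an odd cycle, but we only need the statement for cycles, since the zeon trace already detects cycles directly.

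\textbf{Part 2 (Hamiltonian).} By the observation above with $k=n$, $\tr(\Phi^n)\ne\zm$ if and only if $G$ has an $n$-cycle, i.e.\ a Hamiltonian cycle. For the second equivalence, expand $\tr\zexp(\Phi)=\sum_k\tr(\Phi^k)/k!$ and project to grade $n$. Only the term $k=n$ survives, giving $\langle\tr\zexp(\Phi)\rangle_n=\tr(\Phi^n)/n!$, which is nonzero exactly when $\tr(\Phi^n)$ is. The case $n\le 2$ is degenerate, with edges as $2$-cycles, and would be noted as following the paper's convention.

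\textbf{Main obstacle.} The one point needing care is the grade-separation and no-cancellation argument. Each $\tr(\Phi^k)$ must be genuinely homogeneous of grade exactly $k$, and its coefficients must be positive counts. This holds because each closed $k$-cycle contributes $\zeta_I$ with $|I|=k$ and coefficient $+1$; entries of $\Phi$ carry no signs. Everything else is bookkeeping.
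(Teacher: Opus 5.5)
Your proposal is correct and takes the same route as the paper. The Nil-Structure Theorem identifies $\tr(\Phi^k)$ with a nonnegative integer count of $k$-cycles. The classical odd-cycle characterization of bipartite graphs then settles part 1, and projecting $\tr\zexp(\Phi)$ to grade $n$ settles part 2. Your grade-separation and no-cancellation argument, that each $\tr(\Phi^k)$ is homogeneous of grade exactly $k$ with nonnegative coefficients, supplies the detail the paper's two-line proof leaves implicit.
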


\begin{proof}
A well known result of graph theory is that $G$ is bipartite if and only if all cycles are of even length, proving \ref{bipartite cond.}.  Item \ref{Hamiltonian cond.} follows immediately from the Nil-Structure Theorem and the definition of the matrix exponential. 
\end{proof}

\subsubsection{Matrix Exponentials and Graph Processes}\label{processes}

Let $\mathcal{G}=\{G_1, G_2, \ldots, G_m\}$ be a collection of graphs on a common fixed set of $n$ vertices, and let $\Phi_1, \ldots, \Phi_m$ be their respective nilpotent adjacency matrices.  Then, $\exists \Psi\in\Mat(n; {\CZ_n}^\circ)$ such that
\begin{equation*}
\zexp(\Phi_1+\cdots+\Phi_m)=\mI+\Psi.
\end{equation*}
Let us refer to any sequence of (not necessarily distinct) graphs from $\mathcal{G}$ as a {\em graph process} on $\mathcal{G}$.   In this case, $\Psi$ represents all paths and cycles in all graph processes on $\mathcal{G}$:
\begin{equation*}
\Psi=\sum_{k=1}^n\sum_{\mathbf{u}=(u_1, \ldots, u_k)\in[m]^k} \frac{1}{k!}\prod_{j=1}^k \Phi_{u_j}.
\end{equation*}

Moreover, the (ordered) products of nilpotent adjacency matrix exponentials enumerate paths in sequences of graphs.  For example, \begin{equation*}
\zexp(\Phi_1)\zexp(\Phi_2)=(\mI+\Psi_1)(\mI+\Psi_2)=\mI+\Psi_1+\Psi_2+\Psi_1\Psi_2,
\end{equation*}
where $\Psi_1$ and $\Psi_2$ enumerate paths and cycles in $G_1$ and $G_2$, respectively, and $\Psi_1\Psi_2$ enumerates paths $v_i\to v_j\to v_k$ where $v_i\to v_j$ are paths in $G_1$ and paths $v_j\to v_k$ exist in $G_2$.   More formally,
\begin{align*}
\langle \zeta_{\{i\}}\vert \Psi_1\Psi_2\vert j\rangle&=\sum_{\ell=1}^n \langle \zeta_{\{i\}}\vert\Psi_1\vert \ell \rangle\langle \ell\vert \Psi_2\vert j \rangle\\
&=\sum_{\ell=1}^n \left(\sum_I \omega_I\zeta_I\right) \left( \sum_{J}\nu_J\zeta_J\right)\\
&=\sum_{K}\varpi_K\zeta_K,
\end{align*}
where for each $\ell$, $\omega_I$ is the number of paths in $G_1$ from $v_i$ to $v_\ell$ on vertices indexed by $I$ and $\nu_J$ is the number of paths in $G_2$ from $v_\ell$ to $v_j$ on vertices indexed by $J$.  Then $\varpi_K$ is the number of paths having initial vertex $v_i$  and terminal vertex $v_j$ on vertices indexed by $K$ in which some number of steps occur initially in $G_1$ and subsequently in $G_2$.   Extending the concept by induction, we arrive at the following proposition.

\begin{proposition}
Let $\vec{\mathcal{G}}=(G_1, \ldots, G_m)$ be a sequence of graphs on fixed vertex set $V$ and let $(\Phi_1, \ldots, \Phi_m)$ be the sequence of nilpotent adjacency matrices associated with $\vec{\mathcal{G}}$.  Defining the matrix $\Omega$ by 
\begin{equation*}
\Omega=\prod_{j=1}^m \zexp(\Phi_j)=\prod_{j=1}^m (\mI+\Psi_j),
\end{equation*}
it follows that \begin{equation*}
\langle \zeta_{\{i\}}\vert \Omega\vert j\rangle=\sum_{K}\varpi_K\zeta_K,
\end{equation*}
where $\varpi_K$ is the number of paths from $v_i$ to $v_j$ on vertices indexed by $K$ in the graph process $\vec{\mathcal{G}}$.  Moreover, 
\begin{equation*}
\langle j\vert \Omega\vert j\rangle=\sum_{K}\xi_K\zeta_K,
\end{equation*}
where $\xi_K$ is the number of cycles based at vertex $v_j$ in the graph process $\vec{\mathcal{G}}$. 
\end{proposition}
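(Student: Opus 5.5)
Induction on $m$ is the natural route, using the single-graph description of $\Psi_j=\zdu\zexp(\Phi_j)$ in (\ref{offdiag}) and (\ref{diag}) as the base case. First we fix the meaning of ``path in the graph process'': a path $v_i=u_0\to u_1\to\cdots\to u_r=v_j$ with distinct vertices, split into consecutive (possibly empty) segments. The $s$-th segment is a path in $G_{j_s}$, where $j_1<j_2<\cdots$ are the stages actually used. Expanding $\Omega=\prod_{j=1}^m(\mI+\Psi_j)$ gives
\begin{equation*}
\Omega=\sum_{S\subseteq[m]}\prod_{s\in S}^{\rightarrow}\Psi_s ,
\end{equation*}
where the product is taken in increasing order of $s$. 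Each ordered product $\Psi_{s_1}\cdots\Psi_{s_p}$ then accounts for exactly those process-paths whose nonempty segments occur in the stages $s_1<\cdots<s_p$. The identity term $S=\varnothing$ contributes only on the diagonal (and only to the scalar part), and it is multiplied by $\zeta_{\{i\}}$ in the off-diagonal bra.

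The key step is the two-factor computation already displayed before the proposition, iterated. For a product $\Psi_{s_1}\cdots\Psi_{s_p}$ we expand the matrix product as a sum over intermediate vertices $\ell_1,\dots,\ell_{p-1}$. Each entry $\langle \ell_{t-1}\vert\Psi_{s_t}\vert \ell_t\rangle$ is a weighted sum of blades $\zeta_J$, one for each path in $G_{s_t}$ from $v_{\ell_{t-1}}$ to $v_{\ell_t}$. By (\ref{offdiag}), $J$ excludes the initial vertex; equivalently, the bra $\langle\zeta_{\{i\}}\vert$ supplies $\zeta_{\{i\}}$ at the start. By the blade product rule (\ref{blade product}), a product of blades survives exactly when the vertex sets of consecutive segments (minus the shared endpoints) are disjoint, i.e.\ when the concatenation is self-avoiding. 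The multiplication by $\zeta_{\{i\}}$ then kills concatenations that return to $v_i$. Hence the coefficient of $\zeta_K$ counts concatenated self-avoiding paths on $K$, and induction on $m$ via $\Omega_{m}=\Omega_{m-1}(\mI+\Psi_m)$ formalizes this bookkeeping. For the diagonal statement we repeat the argument with $i=j$ and no extra $\zeta_{\{i\}}$: nonvanishing blades correspond to closed walks at $v_j$ that are otherwise self-avoiding, i.e.\ cycles (edges counted as $2$-cycles, as in the footnote), split across the stages.

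The main obstacle is the weights. The entries of $\Psi_j$ carry factors $1/(|I|-1)!$ or $1/|I|!$, so the coefficients $\varpi_K$ and $\xi_K$ are literally weighted counts rather than integers. One option is to read ``number'' as this weighted count, defining each path's contribution as the product of its segment weights. The other is to replace $\Psi_j$ by $\sum_k\Phi_j^k$ or rescale, which the Nil-Structure Theorem makes clean. I would try stating the weighted version first, noting that each process-path occurs in exactly one ordered product.

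A secondary subtlety is uniqueness of the decomposition. A given path may be split among stages in several ways, since segments can be empty or can break at different vertices. Each distinct split is counted separately, so we must define $\varpi_K$ as the number of such stage-labeled paths. The diagonal identity and scalar terms need separate handling so that the trivial cycle is excluded or included consistently.
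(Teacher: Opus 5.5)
Your proposal is essentially the paper's argument. You expand $\prod_{j}(\mI+\Psi_j)$, use the displayed two-factor computation $\langle \zeta_{\{i\}}\vert\Psi_1\Psi_2\vert j\rangle=\sum_{\ell}\langle \zeta_{\{i\}}\vert\Psi_1\vert \ell\rangle\langle \ell\vert\Psi_2\vert j\rangle$, and extend it by induction. Your caveats are also justified: already for $m=1$, formula \eqref{offdiag} gives coefficients $\omega_I/(|I|-1)!$. So $\varpi_K$ and $\xi_K$ can only be read as weighted counts, plus the scalar $1$ on the diagonal. Each path or cycle on $K$ is counted once for every compatible weakly increasing assignment of its edges to stages, with weight $\prod_s 1/n_s!$, where $n_s$ is the number of edges assigned to stage $s$.

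One correction: a plain entry $\langle \ell\vert\Psi_s\vert \ell'\rangle$ is not purely a path sum with the initial vertex excluded. It also contains walks that revisit $v_\ell$, and for $\ell=\ell'$ it is a sum over cycles through $v_\ell$. Every such blade contains $\zeta_{\{\ell\}}$, however. It is therefore killed by the terminal generator $\zeta_{\{\ell\}}$ of the preceding factor, by the bra $\zeta_{\{i\}}$, or (on the diagonal) by the final $\zeta_{\{j\}}$, so your conclusion stands.
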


Obviously, the collection of matrices $\mathfrak{I}_n=\{\mI+\Psi: \Psi\in\M_n({\CZ}^\circ)\}\subset \M_n(\CZ)$ forms a multiplicative subgroup of the order-$n$ invertible zeon matrices, $\G_n$.   The subgroup $\I_n$ motivates our discussion of unitary and weakly unitary zeon matrices.

\section{Unitary and Weakly Unitary Zeon Matrices}\label{unitary matrices}

In order to generalize the concept of the Hermitian conjugate of a complex matrix for use with zeon matrices, it is necessary to find a suitable ``zeon dagger operator.''   To begin, we define the {\em zeon grade involution} of an element $u=\sum_I u_I\zeta_I\in\CZ$ by  $\hat{u}=\sum_I (-1)^{|I|}u_I\zeta_I$.  Note that $\hat{\hat{u}}=u$ and $\widehat{uv}=\hat{u}\hat{v}$ for all $u,v\in\CZ$.  Grade involution extends entrywise to zeon matrices.

As will become evident, the composition of the zeon grade involution with the complex conjugate transpose now provides a suitable operator for defining unitary and weakly unitary zeon matrices.  

\begin{definition}
The {\em zeon dagger operator} $\zdag$ is defined on $\M_{m\times n}(\CZ)$  by $\Xi^\zdag={\widehat{\Xi}}^\dag$, where $\widehat{\Xi}$ is the {\em zeon grade involution} of $\Xi\in\M_{m\times n}(\CZ)$.  
\end{definition}

For any $\Xi\in\M_{m\times n}(\CZ)$,  it is obvious that the operator $\zdag$ satisfies  \begin{equation*}
{\Xi^\zdag}^\zdag= ({\widehat{\Xi}}^\dag)^\zdag=(\widehat{({\widehat{\Xi}}^\dag)})^\dag\\
=(\Xi^\dag)^\dag=\Xi.
\end{equation*}  
Moreover, for matrices $\Psi, \Xi\in\M_n(\CZ)$, we have $(\Psi\Xi)^\zdag=\Xi^\zdag\Psi^\zdag$.

\begin{remark}
The zeon dagger operator restricts to the ordinary conjugate transpose for scalar matrices; i.e.,  $(\zco \Xi)^\zdag=(\zco\Xi)^\dag$ for any $\Xi\in\M_{m\times n}(\CZ)$.
\end{remark} 

Unitary zeon matrices are now defined in the expected way.

\begin{definition}
An invertible zeon matrix $\Xi\in\M_n(\CZ)$ is said to be (strongly) {\em unitary} if \begin{equation*}
\Xi^\zdag\Xi=\Xi\Xi^\zdag=\mI.
\end{equation*}
\end{definition}

We note that the collection $\U_n$ of unitary matrices in $\M_n(\CZ)$ form a multiplicative group, since $\mI\in \U_n$ and for any $\Psi, \Xi\in\U_n$, we have  \begin{equation*}
(\Psi\Xi)^\zdag(\Psi\Xi)=\Xi^\zdag(\Psi^\zdag\Psi)\Xi=\mI.
\end{equation*}
Necessary and sufficient conditions for a zeon matrix to be unitary are established in the next lemma.

\begin{lemma}\label{unitary restrictions}
Let $\Xi\in\M_n(\CZ)$.  Then, $\Xi=\in\U_n$ if and only if the matrices $C=\zco\Xi$ and $D=\zdu\Xi$ satisfy the following: \begin{enumerate}
\item $C^\dag C=\mI$, i.e., $C$ is a unitary complex matrix; 
\item $D^\zdag D=\zm$;
\item $C^\dag D=-D^\zdag C$.
\end{enumerate} 
\end{lemma}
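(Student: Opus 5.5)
The plan is to split $\Xi$ into scalar and dual parts, $\Xi=C+D$ with $C=\zco\Xi$ and $D=\zdu\Xi$, and expand $\Xi^\zdag\Xi$ directly. The grade involution fixes scalars, and by the Remark following the definition of $\zdag$ we have $C^\zdag=C^\dag$. Since $\zdag$ is additive, $\Xi^\zdag=C^\dag+D^\zdag$. Hence
\begin{equation*}
\Xi^\zdag\Xi=C^\dag C+\bigl(C^\dag D+D^\zdag C\bigr)+D^\zdag D .
\end{equation*}
The three groups of terms have a clear grade structure. $C^\dag C$ has purely scalar entries. Every entry of $C^\dag D+D^\zdag C$ lies in $\CZ^\circ$, since each term involves exactly one nilpotent factor. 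The product $D^\zdag D$ has entries in grades $\ge 2$.

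For sufficiency, assume conditions 1--3. Substituting them gives $\Xi^\zdag\Xi=\mI+\zm+\zm=\mI$. Invertibility of $\Xi$ follows because $\zco\Xi=C$ is unitary, hence invertible, and a zeon matrix with invertible scalar part is invertible (write $\Xi=C(\mI+C^{-1}D)$ and use that $C^{-1}D$ is nilpotent). A left inverse of a square matrix over the commutative ring $\CZ$ is then a two-sided inverse, so $\Xi\Xi^\zdag=\mI$ as well. The same conclusion can be reached by running the identical expansion on $\Xi\Xi^\zdag$ and using $CC^\dag=\mI$.

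For necessity, apply $\zco$ to the identity $\Xi^\zdag\Xi=\mI$. Only $C^\dag C$ contributes, which yields condition 1. Applying $\zdu$ leaves
\begin{equation*}
C^\dag D+D^\zdag C+D^\zdag D=\zm .
\end{equation*}
It then remains to separate this single identity into conditions 2 and 3.

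I would first compare grade-1 parts. Writing $D_k=\langle D\rangle_k$ entrywise, and noting $\widehat{D_k}=(-1)^kD_k$, the grade-1 part gives $C^\dag D_1=D_1^\dag C$, which is condition 3 in grade 1. I would then proceed by induction on grade. The grade-$k$ part of $D^\zdag D$ involves only $D_j$ with $j<k$, so I would try to use the inductive hypothesis together with the identity $\Xi\Xi^\zdag=\mI$ (giving $CD^\zdag+DC^\dag+DD^\zdag=\zm$) to show that the grade-$k$ part of $D^\zdag D$ vanishes. That would force the grade-$k$ part of $C^\dag D+D^\zdag C$ to vanish too. A natural tool is to multiply the second identity on the left by $C^\dag$ and on the right by $C$, which turns it into a relation for $C^\dag D$ that can be compared with the first.

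I expect this separation to be the main obstacle. A priori, the combined relation $C^\dag D+D^\zdag C=-D^\zdag D$ only says that the cross term equals $-D^\zdag D$, not that each vanishes. For instance, in low dimension one can test $\Xi=\zexp(\I\, tH)$ with $H$ a nilpotent Hermitian-type zeon matrix. If the inductive argument fails, the fallback is to prove the lemma with conditions 2 and 3 replaced by the combined condition $C^\dag D+D^\zdag C+D^\zdag D=\zm$. In that case the stated conditions 2--3 remain sufficient but would need to be checked against such an example before being claimed as necessary.
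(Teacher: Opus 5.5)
Your expansion of $\Xi^\zdag\Xi$ is the same as the paper's proof, and your sufficiency direction is complete, including the point that a one-sided inverse of a square matrix over the commutative ring $\CZ$ is two-sided. The paper, however, simply asserts that the expanded identity holds ``if and only if'' conditions 1--3 hold. The step you flagged is exactly where this fails: necessity of conditions 2 and 3 is false, so no induction can close it. Your induction already breaks at grade 2. There the combined identity reads $C^\dag D_2+D_2^\dag C-D_1^\dag D_1=\zm$, and $D_1^\dag D_1$ need not vanish, so the cross term can absorb it. Concretely, take $n=1$ and $\Xi=(1+\zeta_{\{1\}})(1+\zeta_{\{2\}})=1+\zeta_{\{1\}}+\zeta_{\{2\}}+\zeta_{\{1,2\}}$. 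Then $\Xi^\zdag=(1-\zeta_{\{1\}})(1-\zeta_{\{2\}})$, so $\Xi^\zdag\Xi=\Xi\Xi^\zdag=1$ and $\Xi\in\U_1$. Yet with $C=1$ and $D=\zeta_{\{1\}}+\zeta_{\{2\}}+\zeta_{\{1,2\}}$ one finds $D^\zdag D=-2\zeta_{\{1,2\}}\neq0$ and $C^\dag D+D^\zdag C=2\zeta_{\{1,2\}}\neq0$, so conditions 2 and 3 both fail. Each factor $1+\zeta_{\{i\}}$ does satisfy 1--3, so the set cut out by 1--3 is not even closed under multiplication, whereas $\U_n$ is. The test family you suggested shows the same thing: $\zexp(\I(\zeta_{\{1,2\}}+\zeta_{\{3,4\}}))=1+\I\zeta_{\{1,2\}}+\I\zeta_{\{3,4\}}-\zeta_{\{1,2,3,4\}}$ is unitary with $D^\zdag D=2\zeta_{\{1,2,3,4\}}\neq 0$.

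So the lemma as stated is only half true, and the paper's one-line ``clearly'' hides precisely the gap you anticipated. Your fallback is the correct repair: $\Xi\in\U_n$ if and only if $C^\dag C=\mI$ and $C^\dag D+D^\zdag C+D^\zdag D=\zm$. This is just $\Xi^\zdag\Xi=\mI$ read off in scalar and dual parts. Conditions 2 and 3 together are sufficient but not necessary. Do not spend more effort on the grade-by-grade separation. Instead, state this corrected version; your argument already proves it in full.
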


\begin{proof}
Letting $C=\zco\Xi$ and $D=\zdu \Xi$, we see that 
\begin{align*}
\Xi^\zdag\Xi=(C+D)^\dag(C+D)&=C^\zdag C+C^\zdag D+D^\zdag C+D^\zdag D\\
&=C^\dag C+C^\dag D+D^\zdag C+D^\zdag D.
\end{align*}
Clearly, $C^\dag C+C^\dag D+D^\zdag C+D^\zdag D=\mI$ if and only if the conditions in the statement of the lemma are satisfied.
\end{proof}

The conditions of Lemma \ref{unitary restrictions} are quite restrictive.  Relaxing these conditions leads to a subgroup of matrices more useful for matters at hand.

\begin{definition}
An invertible zeon matrix $\Xi\in\M_n(\CZ)$ is said to be {\em weakly unitary} if there exists a diagonal nilpotent zeon matrix $\Gamma\in \M_n(\CZ^\circ)$ such that \begin{equation*}
\Xi^\zdag\Xi=\Xi\Xi^\zdag=\mI+\Gamma.
\end{equation*} 
\end{definition}

The weakly unitary zeon matrices of order $n$ clearly form a subgroup $\W_n$ of $\G_n$, the invertible zeon matrices of order $n$.  Further, the unitary matrices $\U_n$ are a subgroup of $\W_n$.  We shall refer to $\W_n$ as the {\em weak unitary group} of $n\times n$ zeon matrices.  

A zeon-valued inner-product is defined on $\M_{m\times n}(\CZ)$ by 
\begin{equation}\label{hdag inner product}
\langle \psi,\omega \rangle_\zdag=\Tr(\omega^\zdag\psi)
\end{equation}
where $\Tr(\Xi)=\frac{1}{n}\tr(\Xi)$ denotes {\em normalized trace} of $\Xi\in\M_n(\CZ)$.  It follows immediately that every weakly unitary zeon matrix $\psi\in\W_n$ satisfies $\langle\psi,\psi\rangle_\zdag=1+\gamma$ for some $\gamma\in\CZ^\circ$.  If $\gamma=0$, then $\psi\in\U_n$.

Let $\Phi$ be the nilpotent adjacency matrix of a graph $G$ on $n$ vertices.  The exponential of $\I\Phi$ reduces to the finite sum
\begin{equation*}
\zexp(\I\Phi)=\displaystyle\sum_{\ell=0}^{\kappa(\Phi)-1}\frac{\I^\ell}{\ell!}\Phi^\ell=\mI+\Psi, 
\end{equation*}
where $\Psi\in\Mat(n; {\CZ_n}^\circ)$.  

Recalling \eqref{offdiag}, the off-diagonal entries of $\Psi$ satisfy
\begin{equation*}
\langle \zeta_{\{i\}}\vert\Psi\vert j\rangle=\sum_{I\subseteq([n]\setminus\varnothing)}\frac{\I^{|I|-1}\omega_I}{(|I|-1)!}\zeta_I,\hskip12pt(i \ne j),
\end{equation*}
where $\omega_I$ is the number of paths in $G$ from $v_i$ to $v_j$ on vertices indexed by $I$.
Finally, the diagonal entries of $\Psi$ satisfy
\begin{equation*}
\langle i\vert\Psi\vert i\rangle=\sum_{I\subseteq([n]\setminus\varnothing)}\frac{\I^{|I|}\xi_I}{|I|!}\zeta_I,
\end{equation*} where $\xi_I$ is the number of cycles in $G$ on vertices indexed by $I$, and containing $v_i$. 

For convenience, we define the rank-one projector $\varpi_j$ on ${\CZ}^n$ by $\varpi_j=\vert j\rangle\langle j\vert$ for $1\le j\le n$.  We further define $\Delta_n={\rm diag}(\zeta_{\{1\}},\ldots, \zeta_{\{n\}})\in\M_n(\CZ_n)$ to be the diagonal matrix whose nonzero entries are the generators of $\CZ_n$.  With these notational preliminaries in mind, we can state our next result.

\begin{theorem}\label{orthogonal decomposition}
Let $\Phi$ be the nilpotent adjacency matrix of a graph $G=(V,E)$ on $n$ vertices. Then, there exist unique matrices $\Psi_1\in\mathfrak{W}_n(\CZ_n)$ and $\Psi_0\in\M_n({\CZ_n}^\circ)$ such that $\zexp(\I\Phi)=\Psi_1 +\Psi_0$.  In particular, $\Psi_1$ is given by 
\begin{equation}
\Psi_1=\sum_{j=1}^n\varpi_j\zexp(\I\Phi){\varpi_j}^\dag+\Delta_n\zdu(\zexp(\I\Phi)).
\end{equation}
Moreover, the decomposition is orthogonal:  \begin{equation*}
\langle \Psi_0,\Psi_1\rangle_\zdag=\langle \Psi_1,\Psi_0\rangle_\zdag=0.
\end{equation*}
\end{theorem}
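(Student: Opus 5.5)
The plan is to first make $\Psi_1$ and $\Psi_0$ explicit entrywise, and then verify the three claims by direct entrywise computation using the Nil-Structure Theorem.

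\emph{Step 1: explicit form.} Write $\zexp(\I\Phi)=\mI+\Psi$ with $\Psi\in\M_n({\CZ_n}^\circ)$. The first sum $\sum_j\varpi_j\zexp(\I\Phi)\varpi_j^\dag$ is the diagonal part $\mathrm{diag}(1+\Psi_{11},\ldots,1+\Psi_{nn})$. The second term $\Delta_n\zdu(\zexp(\I\Phi))=\Delta_n\Psi$ multiplies row $i$ of $\Psi$ by $\zeta_{\{i\}}$.
\begin{itemize}
\item Every blade in $\Psi_{ii}$ corresponds to a cycle through $v_i$, so it contains $\zeta_{\{i\}}$. Hence $\zeta_{\{i\}}\Psi_{ii}=0$.
\item For $i\neq j$, the product $\zeta_{\{i\}}\Psi_{ij}=\langle\zeta_{\{i\}}\vert\Psi\vert j\rangle$ is exactly the path enumerator from \eqref{offdiag}, with the factors $\I^{|I|-1}$.
\end{itemize}
Thus $\Psi_1$ has diagonal $1+\Psi_{ii}$ and off-diagonal entries $\zeta_{\{i\}}\Psi_{ij}$. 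Consequently $\Psi_0=\zexp(\I\Phi)-\Psi_1$ has zero diagonal and off-diagonal entries $(1-\zeta_{\{i\}})\Psi_{ij}$, so $\Psi_0\in\M_n({\CZ_n}^\circ)$. Since $\zco\Psi_1=\mI$, the matrix $\Psi_1$ is invertible.

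Uniqueness I would interpret relative to this prescribed form. The splitting is determined by separating, in each off-diagonal entry, the blades containing $\zeta_{\{i\}}$ (genuine paths from $v_i$) from those that do not. I would remark that without such a normalization uniqueness cannot hold literally, since $\W_n$ is closed under adding suitable nilpotent corrections.

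\emph{Step 2: weak unitarity (main obstacle).} I must show that the off-diagonal entries of $\Psi_1^\zdag\Psi_1$ (and of $\Psi_1\Psi_1^\zdag$) vanish. The $(k,j)$ entry, $k\ne j$, is $\sum_i\overline{\widehat{(\Psi_1)_{ik}}}(\Psi_1)_{ij}$.
\begin{itemize}
\item If $i\notin\{j,k\}$, both factors contain $\zeta_{\{i\}}$, so the term is zero.
\item The $i=k$ term is $\overline{\widehat{(1+\Psi_{kk})}}\,\zeta_{\{k\}}\Psi_{kj}$. The cross term $\overline{\widehat{\Psi_{kk}}}\,\zeta_{\{k\}}\Psi_{kj}$ vanishes because $\Psi_{kk}$ already contains $\zeta_{\{k\}}$. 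What remains is $\zeta_{\{k\}}\Psi_{kj}$.
\item Symmetrically, the $i=j$ term reduces to $\overline{\widehat{\zeta_{\{j\}}\Psi_{jk}}}$.
\end{itemize}
Reversing paths gives the same vertex sets $I$ with the same counts $\omega_I$. The coefficient of $\zeta_I$ in the second expression is $\overline{\I^{|I|-1}}(-1)^{|I|}\omega_I/(|I|-1)!=-\I^{|I|-1}\omega_I/(|I|-1)!$, which is exactly the negative of the first. So the two terms cancel, and both products equal $\mI+\Gamma$ with $\Gamma$ diagonal and nilpotent. The delicate point is this sign bookkeeping between complex conjugation and grade involution, which is precisely why $\zdag$ is defined as it is. The same argument handles $\Psi_1\Psi_1^\zdag$ with rows in place of columns.

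\emph{Step 3: orthogonality.} I will compute the diagonal entries of $\Psi_1^\zdag\Psi_0$, namely $\sum_i\overline{\widehat{(\Psi_1)_{ik}}}(\Psi_0)_{ik}$, where only $i\ne k$ contributes because $\Psi_0$ has zero diagonal. Both factors are built from blades of $\Psi_{ik}$, and every such blade contains the terminal generator $\zeta_{\{k\}}$. Hence each product is zero, so the trace vanishes and $\langle\Psi_0,\Psi_1\rangle_\zdag=0$. The same reasoning gives $\langle\Psi_1,\Psi_0\rangle_\zdag=0$.
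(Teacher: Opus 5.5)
Your proposal is correct and follows the paper's entrywise strategy: $\Psi_1$ is the diagonal of $\mI+\Psi$ plus the path enumerators $\zeta_{\{i\}}\Psi_{ij}$, and products are killed through shared generators. At two points it is more accurate than the paper's own proof.

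First, consider the off-diagonal entries of $\Psi_1^\zdag\Psi_1$. Writing $\Psi_1=(y_{ij})$, the paper asserts that every summand $\overline{\widehat{y_{\ell i}}}\,y_{\ell j}$ is annihilated by $\zeta_{\{\ell\}}$. This fails for $\ell\in\{i,j\}$: the $\ell=i$ summand equals $\zeta_{\{i\}}\Psi_{ij}$, which is nonzero whenever a $v_i$--$v_j$ path exists. The entry vanishes only through the cancellation you exhibit, via path reversal and the sign $(-1)^{|I|}\,\overline{\I^{|I|-1}}=-\I^{|I|-1}$. This step is exactly where both the factor $\I$ in $\zexp(\I\Phi)$ and the grade involution in $\zdag$ are needed, so your version supplies the argument the paper is missing.

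Second, the paper's proof never addresses uniqueness, and your doubt is justified. Already $\mI+\Psi$ is a decomposition of the stated type, since $\mI\in\U_n\le\W_n$. Even with orthogonality imposed, $(\mI+{\rm diag}\,\Psi)+(\Psi-{\rm diag}\,\Psi)$ is another one: a diagonal matrix with entries $1+\Psi_{jj}$ is weakly unitary, and a diagonal matrix times a zero-diagonal matrix has zero trace. So uniqueness fails for every graph with at least one edge, and the uniqueness clause must be dropped or reinterpreted as you propose.

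One small addition: the definition of $\W_n$ demands $\Xi^\zdag\Xi=\Xi\Xi^\zdag=\mI+\Gamma$ with the same $\Gamma$. You should therefore record that the diagonal entries of both products equal $1+2\Psi_{jj}$, using $\overline{\widehat{\Psi_{jj}}}=\Psi_{jj}$ and $\Psi_{jj}^2=0$.
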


\begin{proof}
Writing $\zexp(\I\Phi)=(\phi_{\ell j})$, we see that \begin{equation*}
\sum_{j=1}^n\varpi_j\zexp(\I\Phi){\varpi_j}^\dag={\rm diag}(\phi_{11},\phi_{22},\ldots, \phi_{nn}),
\end{equation*}
the diagonal matrix whose main diagonal is that of $\zexp(\I\Phi)$.  Further, the matrix $\Delta_n\zdu(\zexp(\I\Phi))$ is obtained by multiplying the off-diagonal elements of the path matrix by zeon generators corresponding to initial vertices of paths.  Since the nilpotent main diagonal elements correspond to cycles, such initial vertices are already accounted for and hence zeroed out by $\Delta_n$. Hence, all nonzero entries of $\Delta_n\zdu(\zexp(\I\Phi))$ correspond to paths in the graph $G$.  In accordance with previous results, when $i\ne j$, 
\begin{equation*}
\langle i\vert \Psi_1\vert j\rangle=\sum_{k=1}^m\sum_{\{I:|I|=k\}}\frac{\I^{|I|-1}\omega_I}{(k-1)!}\zeta_I
\end{equation*}
where $\omega_I$ is the number of paths in $G$ from $v_i$ to $v_j$ on vertices indexed by $I$.
Moreover, \begin{equation*}
\langle j\vert\Psi_1\vert j\rangle=1+\sum_{k=2}^n\sum_{\{I:|I|=k\}}\frac{\I^{|I|}\xi_I}{k!}\zeta_I,
\end{equation*}
where $\xi_I$ is the number of cycles in $G$ based at vertex $v_j$ on vertices indexed by $I$.
To see that $\Psi_1$ is weakly unitary, we consider the matrix product ${\Psi_1}^\zdag\Psi_1$. Writing $\Psi_1=(y_{ij})$, we see that when $i\ne j$, the $i$th row and $j$th column of  ${\Psi_1}^\zdag\Psi_1$ is
\begin{equation*}
\langle i\vert{\Psi_1}^\zdag\Psi_1\vert j\rangle =\sum_{\ell=1}^n \overline{y_{\ell i}}y_{\ell j}=0,
\end{equation*}
since $\zeta_{\ell}$ annihilates each term of $y_{\ell j}$ and $y_{i \ell}$.  Further, diagonal entries of  ${\Psi_1}^\zdag\Psi_1$ are given by \begin{equation*}
\langle j\vert {\Psi_1}^\zdag\Psi_1\vert j\rangle = \sum_{\ell=1}^n \overline{y_{\ell j}}y_{\ell j}=1+2\sum_{I}\frac{\I^{|I|}\xi_I}{|I!}\zeta_I,
\end{equation*}
where $\xi_I$ is the number of cycles in $G$ based at vertex $v_j$ on vertices indexed by $I$.  Thus, ${\Psi_1}^\zdag\Psi_1=\mI+\Gamma$ where $\Gamma$ is a diagonal matrix over $\CZ^\circ$.

Let $\Psi_0=\zexp(\I\Phi)-\Psi_1=(z_{ij})$.  We first observe that $z_{jj}=0$ for $j=1, \ldots, n$.  Further, for $i\ne j$, \begin{equation*}
\langle i\vert\Psi_0\vert j\rangle=z_{ij}=(1-\zeta_{\{i\}})\sum_{k=1}^n\sum_{\{I:|I|=k\}}\frac{\omega_I}{(k-1)!}\zeta_I,
\end{equation*}
where $\omega_I$ is the number of paths in $G$ from $v_i$ to $v_j$ on vertices indexed by $I$.  In particular, $z_{ij}\zeta_{\{j\}}=0$ for $j=1, \ldots, n$.  Recalling that $\zeta_{\{j\}}y_{\ell j}=0$ for $\ell=1, \ldots, n$.   It follows immediately that 
\begin{align*}
\langle \Psi_1,\Psi_0\rangle_\zdag&=\Tr({\Psi_0}^\zdag\Psi_1)\\
&=\sum_{j=1}^n \sum_{\ell=1}^n\overline{z_{\ell j}}y_{\ell j}=0.
\end{align*}
Similar reasoning shows $\langle \Psi_0,\Psi_1\rangle_\zdag=0$.
\end{proof}

\begin{remark}
Vertices of the graph $G$ are in one-to-one correspondence with states $\varpi_j$ such that $\langle \varpi_j\vert\Psi_1\vert\varpi_j\rangle_\zdag$ is the expectation of $\Psi_1$ in the state $\varpi_j$.  Moreover, this expectation is expressed in terms of the numbers of cycles and their supporting vertices in the graph $G$.
\end{remark}

As seen in \cite{zeon spectral theorem}, $\lambda\in\CZ$ is an eigenvalue of zeon matrix $\Xi\in\M_n(\CZ)$ if and only if $\zco \lambda$ is a {\em simple} zero of the characteristic polynomial of the scalar matrix $\zco(\Xi)$.  If, on the other hand, $\lambda\in\CZ$ is a zero of $\chi_\Xi(u)$ and $\zco\lambda$ is a multiple zero of $\chi_{\zco(\Xi)}(u)$, then $\chi_\Xi(u)$ has infinitely many zeon zeros.  In particular, we have the following result.

\begin{proposition}\label{zeros of chiephi}
Let $\Phi$ be the nilpotent adjacency matrix of a graph $G=(V,E)$ on $n$ vertices.  Writing
$\zexp(\I\Phi)=\Psi_1+\Psi_0$ where $\Psi_1\in\W_n(\CZ_n)$, it follows that for each $j=1, \ldots, n$, the diagonal entry $\langle j\vert\zexp(\I\Phi)\vert j\rangle=\langle j\vert\Psi_1\vert j\rangle$ is a zero of the characteristic polynomial $\chi_{\Psi_1}(u)$.  Moreover, $\chi_{\Psi_1}(u)$ has infinitely many zeon zeros.
\end{proposition}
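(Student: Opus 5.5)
The idea is to show that $\Psi_1$ is an upper (or lower) triangular zeon matrix, after which its diagonal entries are automatically zeros of $\chi_{\Psi_1}(u)=\det(u\mI-\Psi_1)$. The infinitude of zeros should then come from the spectral remark preceding the proposition: all diagonal entries share the same scalar part $1$, so $\zco\lambda=1$ is a multiple zero of the scalar characteristic polynomial.

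\textbf{Step 1: identify the scalar part.} By the explicit form of $\Psi_1$ in Theorem~\ref{orthogonal decomposition}, the diagonal entries are $1$ plus nilpotent cycle terms. The off-diagonal entries $\langle i\vert\Psi_1\vert j\rangle$ are purely nilpotent, since they are sums over paths and hence contain no scalar term. Therefore $\zco\Psi_1=\mI$, and
\begin{equation*}
\chi_{\zco\Psi_1}(u)=(u-1)^n .
\end{equation*}
For $n\ge2$ the scalar part $1$ of every diagonal entry is thus a zero of multiplicity $n$. By the result quoted from the zeon spectral theorem, any zeon zero $\lambda$ with $\zco\lambda=1$ forces $\chi_{\Psi_1}(u)$ to have infinitely many zeon zeros. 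This gives the ``moreover'' clause once a single zero is exhibited. The case $n=1$ has $\Phi=\zm$ and should be set aside or treated trivially.

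\textbf{Step 2: show $\phi_{jj}:=\langle j\vert\Psi_1\vert j\rangle$ is a zero.} Evaluate
\begin{equation*}
\chi_{\Psi_1}(\phi_{jj})=\det(\phi_{jj}\mI-\Psi_1).
\end{equation*}
The $j$th diagonal entry of this matrix vanishes, so every term of the Leibniz expansion either
\begin{itemize}
\item uses an off-diagonal entry in row $j$, namely $-y_{j\ell}$ with $\ell\neq j$, or
\item passes through the zero diagonal position and is therefore $0$.
\end{itemize}

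The terms of the first kind are the obstacle. Row $j$ of $\Delta_n\zdu(\zexp(\I\Phi))$ carries the factor $\zeta_{\{j\}}$, and each such term involves a full permutation cycle $j\to\ell\to\cdots\to j$. Along that cycle, the entry in the column back to $j$ is the entry $y_{mj}$. The proof of Theorem~\ref{orthogonal decomposition} observes that $\zeta_{\{j\}}y_{\ell j}=0$: every path ending at $v_j$ already contains $\zeta_{\{j\}}$, because the entries of $\Phi$ in column $j$ are $\zeta_{\{j\}}$. So any product containing both $y_{j\ell}$ (which has the factor $\zeta_{\{j\}}$) and some $y_{mj}$ (which also contains $\zeta_{\{j\}}$) vanishes by $\zeta_{\{j\}}^2=0$.

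Every permutation moving $j$ contributes a factor from row $j$ and a factor from column $j$, both off-diagonal. Hence all terms vanish, and $\chi_{\Psi_1}(\phi_{jj})=0$.

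\textbf{Main difficulty.} The delicate point is confirming carefully that the off-diagonal entries in row $j$ and in column $j$ each contain $\zeta_{\{j\}}$ in every basis blade. For the row entries this comes from $\Delta_n$; for the column entries it comes from the definition of $\varphi_{ij}$. I would verify this directly from the Nil-Structure Theorem before closing the argument with Step 1.
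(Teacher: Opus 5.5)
Your Step~2 is essentially the paper's argument. The paper expands $\det(\lambda\mI-\Psi_1)$ along column $j$, discards the zero $(j,j)$ entry, and kills each remaining term because $a_{\ell j}$ and every column-$\ell$ entry of its cofactor both carry $\zeta_{\{\ell\}}$; you instead pair a row-$j$ entry with a column-$j$ entry through $\zeta_{\{j\}}$, which is the same mechanism, and your Step~1 spells out the multiplicity argument for the ``moreover'' clause that the paper only sets up in the remark before the proposition.

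Two corrections are needed. First, your opening claim that $\Psi_1$ is triangular is false: if $\{v_1,v_2\}\in E$, both $\langle 1\vert\Psi_1\vert 2\rangle$ and $\langle 2\vert\Psi_1\vert 1\rangle$ contain $\I\zeta_{\{1,2\}}$; Step~2 never uses this claim, so drop it. Second, for $n=1$ the ``moreover'' clause actually fails, since $\chi_{\Psi_1}(u)=u-1$ has only the zero $u=1$. That case therefore has to be excluded from the statement, not ``treated trivially.''
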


\begin{proof}
Fixing $j\in\{1, \ldots, n\}$ and setting $\lambda=\langle j\vert \Psi_1\vert j\rangle$, it is sufficient to show that $\vert\lambda\mI-\Psi_1\vert=0$.  Write $\lambda\mI-\Psi_1=(a_{\ell k})$ and consider cofactor expansion in the $j$th column: \begin{equation*}
\vert \lambda\mI-\Psi_1\vert=\begin{vmatrix}
a_{jj}-a_{11}&-a_{12}&\cdots&-a_{1j}&\cdots&-a_{1n}\\
-a_{21}&a_{jj}-a_{22}&\cdots&-a_{2j}&\cdots&-a_{2n}\\
\cdots&\cdots&\cdots&0&\cdots&-a_{jn}\\
-a_{n1}&-a_{n2}&\cdots&-a_{nj}&\cdots&a_{jj}-a_{nn}
\end{vmatrix}.
\end{equation*}
Note that $a_{\ell j}$ annihilates $a_{i \ell}$ for all $\ell\ne j$  since each is a sum over paths including the common vertex $\ell$, and that $a_{jj}=0$.  Hence, all terms of the cofactor expansion of the determinant are zero. 
\end{proof}

In \cite{zeon laplacian eigenvalues}, it was shown that eigenvalues of a graph's  zeon combinatorial Laplacian reveal information about the graph's cycles.  Not surprisingly, eigenvalues of weakly unitary components of nilpotent matrix exponentials can reveal the same information.

\begin{theorem}\label{Upsi eigenvalues}
Let $\Phi$ be the nilpotent adjacency matrix of a graph $G=(V,E)$ on $n$ vertices.  Let $u=(z_1, \ldots, z_n)$ be a column vector in $\mathbb{C}^n$ such that $u^\dag u=1$ and $z_i\ne z_j$ for $i\ne j$.  Let $U$ denote the rank-one projector $uu^\dag$ and let $\zexp(\I\Phi)=\Psi_1+\Psi_0$ be the orthogonal decomposition seen in Theorem \ref{orthogonal decomposition}.  Then, $U\zexp(\I\Phi)=U\Psi_1+U\Psi_0$, where $U\Psi_1$ is a weakly unitary zeon matrix having exactly $n$ distinct zeon eigenvalues  $\lambda_1, \ldots, \lambda_n\in \CZ_n$, such that $\zco\lambda_j=z_j$ for $j=1, \ldots, n$. Moreover, the eigenvalues of $U\Psi_1$ are precisely the diagonal entries $\lambda_j=\langle j\vert U\Psi_1\vert j\rangle$: \begin{equation*}
\lambda_j= z_j\langle j\vert \Psi_1\vert j\rangle =z_j\sum_{I\subseteq[n]}\frac{\xi_I}{|I|!}\zeta_I,
\end{equation*}
where $\xi_I$ is the number of $|I|$-cycles in $G$ based at vertex $v_j$, on vertices indexed by $I$.  
\end{theorem}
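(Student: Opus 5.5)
The plan rests on one structural observation. Left multiplication by $U$ should only rescale rows of $\Psi_1$ by the scalars $z_j$. This leaves untouched the combinatorial annihilation structure used in Theorem \ref{orthogonal decomposition} and Proposition \ref{zeros of chiephi}.

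\textbf{Reading of $U$.} For $\lambda_j=z_j\langle j\vert\Psi_1\vert j\rangle$ and $\zco\lambda_j=z_j$ to hold, $U$ must act as the diagonal scalar matrix ${\rm diag}(z_1,\ldots,z_n)$, multiplying row $j$ by $z_j$. A genuine rank-one projector $uu^\dag$ has scalar spectrum $\{1,0,\ldots,0\}$, which contradicts $\zco\lambda_j=z_j$. For $U\Psi_1$ to be weakly unitary, one also needs $U^\dag U=\mI$, i.e.\ $|z_j|=1$. I would therefore make these two conventions explicit at the outset and prove the statement under them. The decomposition $U\zexp(\I\Phi)=U\Psi_1+U\Psi_0$ is then just distributivity.

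\textbf{Step 1 (weak unitarity).} Compute
\begin{equation*}
(U\Psi_1)^\zdag(U\Psi_1)={\Psi_1}^\zdag U^\dag U\Psi_1={\Psi_1}^\zdag\Psi_1=\mI+\Gamma,
\end{equation*}
using that $\zdag$ restricts to $\dag$ on scalar matrices. The equality ${\Psi_1}^\zdag\Psi_1=\mI+\Gamma$ is Theorem \ref{orthogonal decomposition}. For the other order, I would recheck the product $(U\Psi_1)(U\Psi_1)^\zdag=U\Psi_1{\Psi_1}^\zdag U^\dag$. The off-diagonal entries vanish by the same vertex-annihilation argument as in Theorem \ref{orthogonal decomposition}, since scalar factors $z_i\overline{z_k}$ do not affect cancellation. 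The diagonal entries are $|z_j|^2(1+\gamma_j)=1+\gamma_j$.

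\textbf{Step 2 (existence and count of eigenvalues).} The scalar part is $\zco(U\Psi_1)=U\,\zco\Psi_1=U$, because $\zco\Psi_1=\mI$ (the diagonal has scalar part $1$ and the off-diagonal entries are nilpotent). Its characteristic polynomial $\prod_j(u-z_j)$ has $n$ simple zeros, since the $z_j$ are distinct. By the result of \cite{zeon spectral theorem} quoted before Proposition \ref{zeros of chiephi}, each simple zero $z_j$ lifts to exactly one zeon eigenvalue $\lambda_j$ with $\zco\lambda_j=z_j$. Hence there are exactly $n$ distinct eigenvalues.

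\textbf{Step 3 (identification with diagonal entries).} It remains to show that $\lambda_j:=\langle j\vert U\Psi_1\vert j\rangle=z_j\langle j\vert\Psi_1\vert j\rangle$ is a zero of $\chi_{U\Psi_1}$. I would repeat the cofactor argument of Proposition \ref{zeros of chiephi} for $\lambda_j\mI-U\Psi_1$. Its $(j,j)$ entry vanishes. Its off-diagonal entries are scalar multiples of the path sums of $\Psi_1$, so the products $a_{\ell j}a_{i\ell}$ still vanish through the shared vertex $\ell$. Hence every term in the expansion along column $j$ is zero.

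By the uniqueness in Step 2, $\lambda_j$ must be the eigenvalue lying over $z_j$. Substituting the diagonal formula from Theorem \ref{orthogonal decomposition} then gives the displayed cycle expansion. Note that the powers $\I^{|I|}$ appear there, and the statement suppresses them.

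\textbf{Main obstacle.} I expect the delicate point to be Step 3. The cofactor argument in Proposition \ref{zeros of chiephi} is sketchy, and the terms in the expansion of the minors involve diagonal differences $\lambda_j-z_\ell\langle \ell\vert\Psi_1\vert\ell\rangle$, which are no longer nilpotent once $z_\ell\ne z_j$. I would therefore argue more carefully with the permutation expansion. Every permutation with $\sigma(j)\ne j$ contributes a product containing both $a_{\ell j}$ and an entry $a_{\ell\cdot}$ from row $\ell$, sharing vertex $\ell$, and such a product is annihilated. The only surviving permutations fix $j$, and those carry the factor $a_{jj}=0$.
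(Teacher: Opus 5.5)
Your proposal is correct and is essentially the paper's argument: $\zco(U\Psi_1)=U$ with distinct $z_j$ gives $n$ distinct lifted eigenvalues, and $\vert\lambda_j\mI-U\Psi_1\vert=0$ because $a_{jj}=0$ while every other term contains two entries divisible by the same generator $\zeta_{\{\ell\}}$. The paper expands along row $j$ and then the minor along row $\ell$, which is your column argument transposed.

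Your diagnosis of the statement is also right: the paper's proof tacitly takes $U={\rm diag}(z_1,\ldots,z_n)$; weak unitarity additionally needs $\vert z_j\vert=1$, so $u^\dag u=1$ must be dropped for $n\ge 2$; and the displayed formula has lost the factors $\I^{|I|}$. Your Step 1 supplies the weak-unitarity check the paper omits. Only fix the slip in your final paragraph: the factor sharing $\zeta_{\{\ell\}}$ with $a_{\ell j}$ is the column-$\ell$ entry $a_{\sigma^{-1}(\ell)\,\ell}$ (off-diagonal since $\sigma(\ell)=j\ne\ell$), not a second entry of row $\ell$.
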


\begin{proof}
Observing that $\zco(U\Psi_1)=U$, it follows that $U\Psi_1$ has $n$ distinct zeon eigenvalues $\lambda_1, \ldots, \lambda_n$ whose scalar parts are $\zco \lambda_j=z_j$ for $j=1, \ldots, n$.  

Write $\mathfrak{A}=U\Psi_1=(a_{ij})$, so that $a_{ij}=\langle i\vert U\Psi_1\vert j\rangle$.  Fixing $j\in\{1, \ldots, n\}$, consider the determinant\begin{align*}
\chi_{\mathfrak{A}}(a_{jj})&=\vert a_{jj}\mI-\mathfrak{A}\vert\\
&=\begin{vmatrix}
a_{jj}-a_{11}&-a_{12}&-a_{13}&\cdots&\cdots&-a_{1n}\\
-a_{21}&a_{jj}-a_{22}&-a_{23}&\cdots&\cdots&-a_{2n}\\
\cdots &\cdots&\cdots &\cdots&\cdots&\cdots&\\
-a_{j1}&-a_{j2}&\cdots&0&\cdots&-a_{jn}\\
\cdots &\cdots&\cdots &\cdots&\cdots&\cdots&\\
-a_{n1}&-a_{n2}&\cdots&\cdots&\cdots&a_{jj}-a_{nn}
\end{vmatrix}.
\end{align*}

Computing the determinant by cofactor expansion in the $j$th row, we note first that the cofactor of 0 is determined by the only invertible minor in the determinant.  All other summands in the cofactor expansion are nilpotent.  Secondly, we note that $\zeta_{\{\ell\}}a_{j\ell}=0$ for each $\ell$ because terms of $a_{j\ell}$ represent paths from $v_j$ to $v_\ell$ in $G$.  Computing the cofactor $(-1)^{j+\ell}\vert \widehat{\mathfrak{A}_{j\ell}}\vert$  by cofactor expansion of the minor in row $\ell$, we further see that $\zeta_{\{\ell\}}\vert\widehat{\mathfrak{A}_{j\ell}}\vert=0$.  Hence, $(-1)^{j+\ell}a_{j\ell}\vert\widehat{\mathfrak{A}_{j\ell}}\vert=0$ for each $\ell=1, \ldots, n$.

We conclude that $\chi_{\mathfrak{A}}(a_{jj})=\vert a_{jj}\mI-\mathfrak{A}\vert=0$ for $j=1, \ldots, n$, so that the eigenvalues of $\mathfrak{A}=U\Psi_1$ are precisely $\lambda_j=a_{jj}$.
\end{proof}

\begin{remark}
As was the case in Theorem \ref{orthogonal decomposition}, $U\zexp(\I\Phi)=U\Psi_1+U\Psi_0$ is an orthogonal decomposition; i.e.,  $\langle U\Psi_1,U\Psi_0\rangle_\zdag=0$. 
\end{remark}

In the next theorem, an approach is offered to obtain an eigenvalue of $\Psi_1$ while only concerning ourselves with real zeon matrices, i.e., matrices in $\M_n(\Z_n)$.

\begin{theorem}
Let $\Phi$ be the nilpotent adjacency matrix of a graph $G=(V,E)$ on $n$ vertices.  Let $U={\rm diag}(z_1, \ldots, z_n)\in\{\pm 1\}^n$ be a diagonal matrix satisfying $z_j=1$ for some $j\in[n]$ and $z_\ell=-1$ for $\ell\ne j$, and let $\zexp(\I\Phi)=\Psi_1+\Psi_0$ be the orthogonal decomposition seen in Theorem \ref{orthogonal decomposition}.  Then, $U\zexp(\I\Phi)=U\Psi_1+U\Psi_0$, where $U\Psi_1$ is a weakly unitary zeon matrix having unique eigenvalue  $\lambda_j=\langle j\vert U\Psi_1\vert j\rangle = \langle j\vert \Psi_1\vert j\rangle\in \CZ_n$.  This eigenvalue is also a zero of the characteristic polynomial of $\Psi_1$ and is given by \begin{equation*}
\lambda_j=\sum_{I\subseteq[n]}\frac{\I^{|I|}\xi_I}{|I|!} \zeta_I,
\end{equation*}
where $\xi_I$ is the number of $|I|$-cycles in $G$ based at vertex $v_j$, on vertices indexed by $I$.   Moreover, a unit right eigenvector $\vert \mu\rangle=(\mu_1, \ldots,\underset{j\text{\rm th pos}}{\underbrace{1}},\ldots \mu_{n})^\intercal$ is given by 
\begin{equation*}
\mu_\ell=
-\frac{1}{2}\langle \ell \vert U\Psi_1\vert j\rangle=-\frac{1}{2}\sum_{I\subseteq[n]}\frac{\I^{|I|-1}\omega_I}{(|I|-1)!}\zeta_I,
\end{equation*}
where $\omega_I$ denotes the number of paths from $v_\ell$ to $v_j$ in $G$ on vertices indexed by $I$.
\end{theorem}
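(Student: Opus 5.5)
The plan is to follow the template of the proof of Theorem \ref{Upsi eigenvalues}, adapted to the signed diagonal $U$. The argument has four steps.

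\emph{Step 1: weak unitarity.} $U$ is a real diagonal $\pm1$ matrix, so $U^\zdag=U^\dag=U$ and $U^2=\mI$. Then
\begin{equation*}
(U\Psi_1)^\zdag(U\Psi_1)={\Psi_1}^\zdag U^2\Psi_1={\Psi_1}^\zdag\Psi_1=\mI+\Gamma
\end{equation*}
by Theorem \ref{orthogonal decomposition}. For the product in the other order, $U\Psi_1{\Psi_1}^\zdag U=U(\mI+\Gamma)U=\mI+\Gamma$, since diagonal matrices commute. Hence $U\Psi_1\in\W_n$.

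\emph{Step 2: the diagonal entry is a zero of $\chi_{U\Psi_1}$.} Entrywise, $\langle \ell\vert U\Psi_1\vert k\rangle=z_\ell y_{\ell k}$, where $\Psi_1=(y_{\ell k})$. In particular $a_{jj}=y_{jj}=\langle j\vert\Psi_1\vert j\rangle$, and its explicit form $\sum_I \I^{|I|}\xi_I\zeta_I/|I|!$ is read off from the proof of Theorem \ref{orthogonal decomposition}. To show $\chi_{U\Psi_1}(a_{jj})=0$, I expand $\vert a_{jj}\mI-U\Psi_1\vert$ along row $j$, whose diagonal entry is $0$, exactly as in the proof of Theorem \ref{Upsi eigenvalues}. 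Each remaining summand carries a factor $a_{j\ell}$, which is a sum over paths from $v_j$ to $v_\ell$. Such a factor is annihilated by $\zeta_{\{\ell\}}$, and also by the $\zeta_{\{\ell\}}$ appearing in every term of row $\ell$ of the complementary minor. Thus every summand vanishes.

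The same expansion applied to $\Psi_1$ in place of $U\Psi_1$ gives the statement that $\lambda_j$ is a zero of $\chi_{\Psi_1}$. This is just Proposition \ref{zeros of chiephi}, so I will cite it.

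\emph{Step 3: uniqueness of the eigenvalue.} This step uses the result quoted from \cite{zeon spectral theorem}. The scalar part of $U\Psi_1$ is $\zco(U\Psi_1)=U$, whose characteristic polynomial $(u-1)(u+1)^{n-1}$ has $1$ as a \emph{simple} root. A zeon eigenvalue must have simple-root scalar part, so its scalar part is $1$; the root $-1$ is multiple and contributes only infinitely many zeros that are not eigenvalues. Since $\zco\lambda_j=1$, the zero $\lambda_j$ found in Step 2 is an eigenvalue. I expect the main obstacle to be arguing that it is the unique eigenvalue with scalar part $1$. I would get this from Hensel-type lifting: a simple root lifts uniquely, because $\chi'(\lambda_j)$ is invertible, having scalar part $\prod(1-(-1))=2^{n-1}\neq0$.

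\emph{Step 4: the eigenvector.} I solve $(U\Psi_1)\mu=\lambda_j\mu$ with $\mu_j=1$. For $\ell\neq j$, row $\ell$ reads
\begin{equation*}
-y_{\ell\ell}\mu_\ell-\sum_{k\ne\ell,j}y_{\ell k}\mu_k-y_{\ell j}=\lambda_j\mu_\ell.
\end{equation*}
I will check that $\mu_\ell=-\tfrac12 z_\ell y_{\ell j}=\tfrac12 y_{\ell j}$ satisfies it. The leading scalar terms give $(\lambda_j+y_{\ell\ell})\mu_\ell\approx 2\mu_\ell=-y_{\ell j}$. The higher correction terms $y_{\ell\ell}\mu_\ell$, $\lambda_j\mu_\ell$ and $y_{\ell k}\mu_k$ must then be shown to vanish by annihilation: the paths involved share the vertex $\ell$ or $j$, and the $\Delta_n$-factor in $\Psi_1$ contributes $\zeta_{\{\ell\}}$. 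Row $j$ is handled by the same annihilation: $y_{jk}\mu_k$ is a product of two path terms through $v_k$, so it is $0$, leaving $y_{jj}=\lambda_j$.

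This last verification, namely that all nonlinear correction terms are killed, is the delicate bookkeeping point. If it fails, I would instead define $\mu$ via $\mu_\ell=-(\lambda_j+y_{\ell\ell})^{-1}y_{\ell j}$ and show that this reduces to the stated formula modulo annihilated terms. Unit normalization then follows because the $\mu_\ell$ with $\ell\ne j$ are nilpotent, so $\mu^\zdag\mu$ has scalar part $1$, with nilpotent part absorbed as in weak unitarity.
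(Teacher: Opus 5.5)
Steps 1--3 are essentially sound and follow the paper's route. The eigenvalue claim comes from the row-$j$ cofactor expansion in the proof of Theorem~\ref{Upsi eigenvalues}, and your $\zeta_{\{\ell\}}$-annihilation of the minors is the right argument. Your weak-unitarity computation and the Hensel-type uniqueness fill in points the paper leaves implicit.

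\textbf{The gap is in Step 4: the vector you set out to verify is not an eigenvector.} Since $z_\ell=-1$, you have $\langle \ell\vert U\Psi_1\vert j\rangle=-y_{\ell j}$. Take any candidate $\mu_k=c\,y_{kj}$ ($k\ne j$). Every term of $y_{ab}$ ($a\ne b$) and of $\zdu y_{aa}$ contains $\zeta_{\{a\}}$ and $\zeta_{\{b\}}$, so the annihilations you list are exact:
\[
y_{\ell k}\mu_k=0 \ (k\ne \ell,j),\qquad y_{\ell\ell}\mu_\ell=\mu_\ell,\qquad \lambda_j\mu_\ell=\mu_\ell.
\]
Row $\ell$ of $U\Psi_1\mu=\lambda_j\mu$ then reads $-c\,y_{\ell j}-y_{\ell j}=c\,y_{\ell j}$. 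This forces $c=-\tfrac12$, which is exactly what your own line $2\mu_\ell=-y_{\ell j}$ says. Your candidate $c=+\tfrac12$ leaves the residual $-2y_{\ell j}\neq0$ whenever a path joins $v_\ell$ to $v_j$. The single-edge graph already shows this: for $n=2$, $j=1$,
\[
U\Psi_1=\begin{pmatrix}1-\tfrac12\zeta_{\{1,2\}}&\I\zeta_{\{1,2\}}\\ -\I\zeta_{\{1,2\}}&-1+\tfrac12\zeta_{\{1,2\}}\end{pmatrix}.
\]
Its eigenvector with first entry $1$ for $\lambda_1=1-\tfrac12\zeta_{\{1,2\}}$ is $(1,-\tfrac{\I}{2}\zeta_{\{1,2\}})^\intercal$, not $(1,\tfrac{\I}{2}\zeta_{\{1,2\}})^\intercal$.

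\textbf{What you are missing is that the statement's two expressions for $\mu_\ell$ are not equal.} They differ by the factor $z_\ell=-1$. Only the path-sum form
\[
-\tfrac12\sum_I \frac{\I^{|I|-1}\omega_I}{(|I|-1)!}\zeta_I=-\tfrac12 y_{\ell j}
\]
is correct. It is also what the paper's example displays, e.g.\ $\mu_2=-\tfrac{\I}{2}\zeta_{\{1,2\}}+\cdots$. Your fallback $\mu_\ell=-(\lambda_j+y_{\ell\ell})^{-1}y_{\ell j}$ is the right vector. However, it equals $-\tfrac12 y_{\ell j}=+\tfrac12\langle \ell\vert U\Psi_1\vert j\rangle$, so it does not reduce to $-\tfrac12\langle \ell\vert U\Psi_1\vert j\rangle$ as you claim. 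A correct Step 4 proves the path-sum form and records the first equality as a sign slip.

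Do not lean on the paper's proof here. Its assertion that $\langle k\vert U\Psi_1\vert\ell\rangle\langle\ell\vert U\Psi_1\vert j\rangle=0$ for all $\ell\ne j$ fails at $\ell=k$, which is exactly the term that fixes the sign.

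\textbf{Two smaller corrections.}
\begin{enumerate}
\item For $n=2$ the root $-1$ of $\chi_U$ is simple. The matrix above has a second eigenvalue $-1+\tfrac12\zeta_{\{1,2\}}$. So ``unique'' can only mean unique among eigenvalues with scalar part $1$, which is what your Hensel argument actually proves.
\item No absorption is needed for normalization. Every term of $\mu_\ell$ ($\ell\ne j$) contains $\zeta_{\{\ell\}}$, so $\mu^\zdag\mu=1$ exactly.
\end{enumerate}
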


\begin{proof}
The fact that $\lambda_j$ is an eigenvalue follows from the proof of Theorem \ref{Upsi eigenvalues}.  Proceeding to the characterization of the associated eigenvector, let $\vert \mu\rangle=(\mu_1, \ldots, 1, \ldots, \mu_{n})^\intercal$ as stated above, and let us consider $U\Psi_1\vert \mu\rangle$.  Note that paths $k\to \ell$ and paths $\ell\to j$ are annihilated by $\zeta_\ell$, so it follows that $\langle k\vert U\Psi_1\vert\ell\rangle\langle \ell\vert U\Psi_1\vert j\rangle=0$ when $\ell\ne j$.  Thus, the $k$th component of $U\Psi_1\mathbf{v}$ is  given by 
\begin{align*}
\langle k\vert U\Psi_1\vert \mu\rangle &=\lambda_j \langle k\vert U\Psi_1\vert j\rangle +\frac{1}{2}\sum_{\ell\ne j}\langle k\vert U\Psi_1\vert\ell\rangle\langle \ell\vert U\Psi_1\vert j\rangle\\
&=\lambda_j \mu_k,
\end{align*}
which shows that $U\Psi_1\vert \mu\rangle=\lambda_j\vert \mu\rangle$.
\end{proof}

\begin{example}
Letting $\Phi$ be the nilpotent adjacency matrix of the graph in Figure \ref{eigenvalue via ampliation}, writing $\zexp(\I\Phi)=\Psi_1+\Psi_0$, and letting $U={\rm diag}(1,-1,-1,-1,-1,-1)$, we have a unique eigenvalue of $U\Psi_1$ satisfying $\zco\lambda_1=1$:
\begin{align*}
\lambda_1=\langle 1\vert U\Psi_1\vert 1\rangle&=1-\frac{1}{2} \zeta _{\{1,2\}}-\frac{1}{2} \zeta _{\{1,3\}}-\frac{\I}{3} \zeta _{\{1,2,3\}}+\frac{1}{12} \zeta _{\{1,2,3,5\}}.
\end{align*}

\begin{figure}
$$\includegraphics[width=200pt]{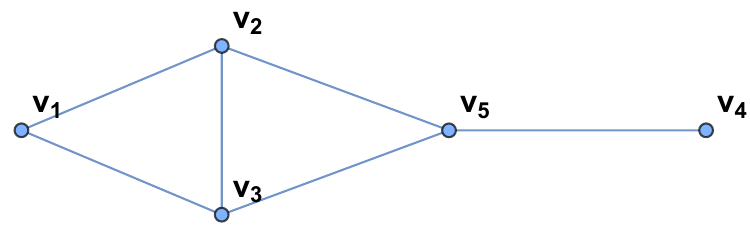}$$
\caption{A five vertex graph.\label{eigenvalue via ampliation}}
\end{figure}
The eigenvector corresponding to $\lambda_1$ is 
\begin{equation*}
\vert \mu\rangle=\begin{pmatrix}
 1 \\
 -\frac{\I}{2}  \zeta _{\{1,2\}}+\frac{1}{4} \zeta _{\{1,2,3\}}+\frac{\I}{12}  \zeta _{\{1,2,3,5\}} \\
 -\frac{\I}{2} \zeta _{\{1,3\}}+\frac{1}{4} \zeta _{\{1,2,3\}}+\frac{\I}{12} \zeta _{\{1,2,3,5\}} \\
 \frac{\I}{12} \zeta _{\{1,2,4,5\}}+\frac{\I}{12} \zeta _{\{1,3,4,5\}}-\frac{1}{24} \zeta _{\{1,2,3,4,5\}} \\
 \frac{1}{4} \zeta _{\{1,2,5\}}+\frac{1}{4} \zeta _{\{1,3,5\}}+\frac{\I}{6} \zeta _{\{1,2,3,5\}} \\
\end{pmatrix}.
\end{equation*}
The fourth component of $\vert \mu\rangle$,  \begin{align*}
\mu_4&=  \frac{\I}{12} \zeta _{\{1,2,4,5\}}+\frac{\I}{12} \zeta _{\{1,3,4,5\}}-\frac{1}{24} \zeta _{\{1,2,3,4,5\}}\\
&=-\frac{1}{2}\left(-\frac{\I}{6} \zeta _{\{1,2,4,5\}}-\frac{\I}{6} \zeta _{\{1,3,4,5\}}+\frac{1}{12} \zeta _{\{1,2,3,4,5\}}\right)\\
&=-\frac{1}{2}\left(-\frac{\I}{3!} \zeta _{\{1,2,4,5\}}-\frac{\I}{3!} \zeta _{\{1,3,4,5\}}+\frac{2}{4!} \zeta _{\{1,2,3,4,5\}}\right),
\end{align*}
indicates the existence of exactly one path from $v_4$ to $v_1$ on vertices indexed by $\{1,2,4,5\}$, one path on vertices indexed by $\{1,3,4,5\}$, and two paths on vertices indexed by $\{1,2,3,4,5\}$.   
\end{example}

\subsubsection*{Hadamard Decomposition}

Observing that each zeon generator can be written as a product of disjoint fermion annihilators (or creators), we have an obvious decomposition of the nilpotent adjacency matrix into an Hadamard product of fermion adjacency matrices.

Letting $\mathfrak{F}$ denote the algebra of fermion annihilators generated by $\{f_i:i\in\mathbb{N}\}$, zeon generators can be defined by $\zeta_{\{j\}}=f_{2j-1}f_{2j}$ for $j\in\mathbb{N}$.  The nilpotent adjacency matrix $\Phi$ of a graph $G=(V,E)$ on $n$ vertices can then be written as the Hadamard product of two {\em fermion adjacency matrices} associated with $G$.   In particular, $\Phi=\Phi_1\odot\Phi_2$, where $\Phi_1$ and $\Phi_2$ are determined by \begin{align*}
\langle i\vert\Phi_1\vert j\rangle=\begin{cases}
f_{2j-1}& \{v_i,v_j\}\in E,\\
0&\text{\rm otherwise,}
\end{cases}\hspace{12pt}&
\langle i\vert\Phi_2\vert j\rangle=\begin{cases}
f_{2j}& \{v_i,v_j\}\in E,\\
0&\text{\rm otherwise.}
\end{cases}
\end{align*}

It is well known\footnote{See, for example, \cite{liu}.} that the Hadamard product is related to the Kronecker product by 
\begin{equation*}
\Phi_1\odot\Phi_2=J(\Phi_1\otimes\Phi_2)J^\dag,
\end{equation*}
where $J$ is an appropriately constructed selection matrix.  When $\Phi_1, \Phi_2$ are square matrices of order $n$, $J$ is the $n\times n^2$ matrix \begin{equation*}
J=\begin{pmatrix}
\mathbf{e}_1&\mathbf{e}_{n+2}&\mathbf{e}_{2n+3}&\cdots&\mathbf{e}_{n^2}
\end{pmatrix}^\intercal,
\end{equation*}
whose $n$ rows $\mathbf{e}_j$ are unit coordinate vectors of $\mathbb{C}^{n^2}$.

The nilpotent (zeon) adjacency matrix is then written in terms of fermion adjacency matrices via $\Phi=J(\Phi_1\otimes \Phi_2)J^\dag$.  It is worth noting that $J^\dag J$ is a rank-$n$ orthogonal projection.  

Letting $\Theta=J^\dag J(\Phi_1\otimes\Phi_2)$, we find that for each positive integer $m\ge2$,  
\begin{align*}
\Phi^m&=(J(\Phi_1\otimes\Phi_2)J^\dag)^m\\
&=J\Theta^m J^\dag.
\end{align*}

Hence, \begin{align*}
\zexp(\I\Phi)=\sum_{\ell=0}^{\kappa(\Phi)}\frac{\I^\ell}{\ell!}\Phi^\ell&=\sum_{\ell=0}^{\kappa(\Phi)}\frac{\I^\ell}{\ell!}J\Theta^\ell J^\dag\\
&=J\left(\sum_{\ell=0}^{\kappa(\Phi)}\frac{\I^\ell}{\ell!}\Theta^\ell \right)J^\dag\\
&=J\zexp(\I\Theta)J^\dag.
\end{align*}

\section{Concluding Remarks}\label{conclusion}

The motivation for considering unitary and weakly unitary zeon matrices is twofold.  First, they arise naturally as exponentials of nilpotent adjacency matrices.  Second, they provide zeon-algebraic analogues of unitary operators, which are of particular importance in quantum probability and quantum computation.  Some avenues of further exploration follow.

\subsubsection*{Zeon Matrix Groups}

The preliminary investigation of unitary zeon matrices leads to a much larger problem of classifying the various zeon matrix groups and subgroups.  As we have seen, the groups $\U_n$, $\W_n$, and $\G_n$ satisfy $\U_n\le\W_n\le\G_n$.  Further, the group $\mathfrak{I}_n=\{\mI+\Psi: \Psi\in\M_n(\CZ^\circ)\}\le \G_n$.

The relationships among zeon matrix groups, including chains and lattices, are yet to be investigated.  The relationships among weakly unitary matrices obtained from orthogonal decompositions of nilpotent adjacency matrix exponentials could be interesting--particularly from a graph-theoretic perspective.

\subsubsection*{Zeon Lie Theory}

The algebra $\M_n(\CZ^\circ)$ of nilpotent zeon matrices constitutes a Lie algebra with the bracket $[X,Y]=XY-YX$.  The exponential map $\zexp:\M_n(\CZ^\circ)\to\G_n(\CZ)$ then satisfies $\zexp(X)= \mI+\Psi$ where $\Psi\in\M_n(\CZ^\circ)$.  In fact, the scalar part of the determinant satisfies $\zco(|\zexp(X)|)=1$ for every $X\in\M_n(\CZ^\circ)$.  

For any nilpotent adjacency matrix $\Phi$, the exponential map satisfies $|\zexp(\I\Phi)|=1$.  Moreover, given any linear combination $\Psi=\sum_{j=1}^m a_j\Phi_j$ of nilpotent adjacency matrices, it can be shown that $|\zexp(\I\Psi)|=1$.  
One obvious question is: What can be said about Lie groups in this context?

\end{document}